\numberwithin{figure}{section}
\newtheorem{theorem}{Theorem}[section]
\newtheorem{lemma}[theorem]{Lemma}
\theoremstyle{definition}
\newtheorem{definition}[theorem]{Definition}
\newtheorem{remark}[theorem]{Remark}
\numberwithin{equation}{section}
\newcommand{\abs}[1]{\lvert#1\rvert}
\newcommand{\R}{\mathbb{R}}
\newcommand{\dist}{{\rm dist}}
\newcommand{\dom}{\Omega}
\newcommand{\eps}{\varepsilon}
\newcommand{\diver}{\operatorname{div}}
\begin{document}

\title[A new proof for the equivalence]{A new proof for the equivalence of weak and viscosity solutions
for the $p$-Laplace equation}

\author{Vesa Julin}
\address{Dipartimento di Matematica e Applicazioni ''R. Cacciopoli'', Universita degli Studi di Napoli ''Federico II'', Napoli, Italy}
\email{vesa.julin@jyu.fi}

\author{Petri Juutinen}

\address{Department of Mathematics and Statistics,
P.O.Box 35, FIN-40014 University of Jyv\"askyl\"a, Finland}
\email{petri.juutinen@jyu.fi}

\thanks{}

\keywords{$p$-Laplace equation, weak solutions, viscosity solutions}
\subjclass[2000]{35J92, 35D40, 35D30}
\date{\today}

\begin{abstract} In this paper, we give a new proof for the fact that the distributional weak solutions and
the viscosity solutions of the $p$-Laplace equation $-\diver(\abs{Du}^{p-2}Du)=0$ coincide. Our proof is 
more direct and transparent than the original one by Juutinen, Lindqvist and Manfredi \cite{jlm}, which relied on 
the full uniqueness machinery of the theory of viscosity solutions. We establish a similar result also for the  
solutions of the non-homogeneous version of the $p$-Laplace equation.
\end{abstract}
\maketitle

\section{Introduction}

The $p$-Laplace equation
\begin{equation}\label{eq:plap}
-\Delta_p u(x):= -\diver(\abs{Du(x)}^{p-2}Du(x))=0,
\end{equation}
where $u$ is a scalar function and $1<p<\infty$,
is the Euler-Lagrange equation for the energy functional
\[
 \int_\Omega \abs{Du(x)}^p\, dx.
\]
It is a generalization of the classical Laplace equation and can be viewed as a prototype of a quasilinear
elliptic equation exhibiting $p$-growth. The $p$-Laplace operator appears in numerous physical applications
(see e.g. \cite{jlm} and references therein) and is fundamental in the nonlinear potential theory, see \cite{hkm}.

The equation \eqref{eq:plap} is degenerate for $p> 2$ and singular for $1<p<2$, and to assure the solvability
of the Dirichlet boundary value problem one usually resorts to the distributional weak solutions, whose
definition is based on integration by parts. However, it is often desirable to have a pointwise interpretation for 
identities involving the second derivatives of a (super)solution, even though these derivatives need not really exist.
For example, the identity
\[
(r-q)\abs{Du}^{2-p} \Delta_p u = (r-p)\abs{Du}^{2-q} \Delta_q u + (p-q)\abs{Du}^{2-r} \Delta_r u,
\]
where $1<q<p<r<\infty$, suggests that a function which is a (super)solution to both $-\Delta_q u=0$
and $-\Delta_r u=0$ is a (super)solution to  the equation $-\Delta_p u=0$ as well. This claim can easily be made rigourous 
in the framework of viscosity solutions, but to conclude that it holds for
distributional weak solutions one needs to know that the weak and viscosity solutions 
coincide. 
 
The equivalence of the distributional weak solutions and viscosity solutions for the $p$-Laplace equation \eqref{eq:plap}
was established by Juutinen, Lindqvist and Manfredi in \cite{jlm}. The proof relied on the full uniqueness machinery of 
the theory of viscosity solutions, including the maximum principle for semicontinuous functions, and used the
structural properties of the $p$-Laplace operator in an essential way. In this paper, we give a new, more direct, proof for 
the equivalence that applies to various other equations as well. Moreover, just as in \cite{jlm}, we actually show that the
equivalence holds also for supersolutions, that is, weak supersolutions and viscosity supersolutions coincide.  

One of the implications, that weak solutions are viscosity solutions, is easy and appeared already in \cite{ju}.
We have nothing new to say about it, so let us focus on the converse.
In \cite{jlm}, the converse was established by showing, roughly speaking, that the Dirichlet boundary value problem
has a unique viscosity solution. Since the boundary value problem also has a weak solution and this
weak solution is already known to be a viscosity solution, it follows that the viscosity solution must be a
weak solution. Our new proof is completely different as we take as our starting point the identity
\begin{equation}\label{integ_parts}
\int_\dom (-\Delta_p u)\psi\, dx = \int_\dom \abs{Du}^{p-2} Du\cdot D\psi\, dx,
\end{equation}
which holds if $u$ and $\psi$ are sufficiently smooth and $\psi$ is compactly supported. To show that a viscosity supersolution 
$u$ is a weak supersolution, we perform a sequence of approximations that enable us to make sense of
(and have a right sign for) $-\Delta_p u$ at sufficiently many points. In view of \eqref{integ_parts},
this then shows that  
$\int_\dom \abs{Du}^{p-2} Du\cdot D\psi\, dx \ge 0$ for all non-negative $\psi$, as desired.

In the degenerate case $p\ge 2$, our ideas are fairly easy to carry out and this is done in Section \ref{sec:dege} below.
However, the singular case $1<p<2$ is more difficult, since it is not even clear what is the meaning of  $-\Delta_p u$ at points
where the gradient vanishes and hence the interpretation of \eqref{integ_parts} requires
some thought. Nevertheless, it turns out that by suitably modifying the argument used for $p\ge 2$, we obtain
the results in the singular case as well. The main new ingredient is that we use an approximation procedure
that depends on the exponent $p$ and effectively cancels out the singularity of the equation, see Section \ref{sec:singular}.

Our method extends to the non-homogeneous equation 
\[
-\Delta_p u(x)=f(x), 
\]
where $f$ is continuous, without much difficulties. Moreover, it also applies,
for example, to the minimal surface equation
\[
-\diver(\frac{Du}{\sqrt{1+\abs{Du}^2}})=f(x).
\]
On the other hand, it seems that in order to cover general state dependent equations of the form 
$-\diver A(x,Du)=0$ some new arguments are needed.

In addition to the interpretation of pointwise identies (see \cite{lms} for more), 
the equivalence of the distributional weak solutions and viscosity 
solutions has also other applications, some of which are a bit surprising. The equivalence has turned out to be a very useful tool 
in certain removability questions, see \cite{jl} and \cite{p}, as well as in the analysis of various approximations of the 
$p$-Laplace equation \cite{in}, \cite{mpr}, \cite{ps}. 

\tableofcontents

\section{Notions of solutions}
\label{sec:notions}

In this section, we recall the notions of weak solutions,
$p$-super\-har\-monic functions and viscosity solutions. 

\begin{definition}
  \label{def:div-weak} Let $f\colon\Omega\to\R$ be continuous. 
  A function $u \in W^{1,p}_{\text{loc}}(\Omega)$ is a \emph{weak supersolution}
  to $-\Delta_p v(x)=f(x)$ in $\Omega$ if
  \begin{equation}
    \label{eq:div-weak}
    \begin{split}
      \int_\Omega \abs{D u(x)}^{p-2}Du(x)\cdot D\psi(x)\,dx\geq \int_\Omega \psi(x)f(x)\,dx
    \end{split}
  \end{equation}
  for every nonnegative test function $\psi \in C^{\infty}_0(\Omega)$.

  A function $u \in W^{1,p}_{\text{loc}}(\Omega)$ is a \emph{weak subsolution} to $-\Delta_p v(x)=f(x)$,
 if $-u$ is a weak supersolution, and 
  a \emph{weak solution}, if it is both a super-- and a subsolution, which
 is equivalent to saying that we have an equality in \eqref{eq:div-weak} for all $\psi \in
  C^{\infty}_0(\Omega)$.
\end{definition}

In the case $f\equiv 0$, the class of weak supersolutions is not adequate for the purposes of nonlinear potential theory, and
thus a larger class of ``supersolutions'' is needed. In what follows, we call a function $p$-harmonic, if it is a
continuous weak solution to $-\Delta_p v(x)=0$.

\begin{definition}
  \label{def:div-harmonic}
  A function $u: \Omega\to  (-\infty,\infty]$ is \emph{$p$-superharmonic}, if
  \begin{enumerate}
  \item $u$ is lower semicontinuous
  \item $u$ is not identically $+\infty$ and
  \item \label{itm:superharm-comp} the comparison principle holds: if
    $h$ is $p$-harmonic in
    $D\subset\subset\Omega$, continuous in $\overline D$, and
    \[
    u \geq  h \quad \text{on}\quad \partial D,
    \]
    then
    \[
    u\geq   h  \quad \text{in}\quad D.
    \]
  \end{enumerate}
A function $u: \Omega\to  [-\infty,\infty)$ is \emph{$p$-subharmonic}, if $-u$ is
$p$-super\-har\-monic.
\end{definition}

The exact relationship between weak supersolutions and $p$-superharmonic functions is one of the main
concerns in the nonlinear potential theory. We refer to \cite{l}, \cite{hkm} for the proof of the following facts:

\begin{enumerate}
 \item Every weak supersolution to $-\Delta_p v(x)=0$
has a lower semicontinuous representative which is $p$-superharmonic.
\item If $u$ is $p$-superharmonic, then the truncations $\min(u,k)$, $k\in\R$, are weak
  supersolutions to $-\Delta_p v(x)=0$. In particular, a locally bounded $p$-superharmonic function is a weak
  supersolution.
\end{enumerate}

Finally, let us discuss the definition of viscosity solutions for the equation $-\Delta_p v(x)=f(x)$. 
For $p\ge 2$, the equation is pointwise well-defined, and the standard definition, see e.g \cite{cil}, can be used as it is.
However, if $1<p<2$, then the equation is singular and extra caution is needed at the points where
the gradient vanishes.  

\begin{definition}
  \label{def:div-viscosity}
  A function $u:\Omega\to (-\infty,\infty]$ is a viscosity
  supersolution to $-\Delta_p v(x)=f(x)$ in $\Omega$, if
  \begin{enumerate}
  \item $u$ is lower semicontinuous.
  \item $u$ is not identically $+\infty$, and
  \item \label{itm:visco-def-3} If $\psi\in C^2(\Omega)$ is such that $u(x_0) = \psi(x_0)$, $u(x) \geq \psi(x)$
for $x \neq x_0$, and $D\psi(x)\ne 0$ for $x \neq x_0$, 
   it holds that
    \[
    \begin{split}
      \lim_{r\to 0}\sup_{x\in B_r(x_0)} (-\Delta_{p} \psi(x))\geq f(x_0).
    \end{split}
    \]
  \end{enumerate}

  A function $u$ is a viscosity subsolution to
  $-\Delta_p v(x)=f(x)$, if $-u$ is a viscosity supersolution, and 
 a viscosity solution, if it is both a
  viscosity super- and subsolution.
\end{definition}

There are several equivalent ways to formulate the definition above, see e.g. \cite{is}. 
If $f\equiv 0$, then one can even completely ignore the test functions whose gradient
vanishes at the contact point, see \cite{jlm}. For the non-homogeneous equation
this is obviously not a good idea.

\section{The non-singular case $p\ge 2$}\label{sec:dege}

We begin by proving our main result in the simplest setting, that is, $p\ge 2$ and $f\equiv 0$. 

\begin{theorem}
\label{main1}
If $u$ is a viscosity supersolution to \eqref{eq:plap}, then it is
$p$-superharmonic.
\end{theorem}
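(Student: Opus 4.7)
The plan is to show that, for every $k\in\R$, the bounded truncation $u_k:=\min(u,k)$ is a weak supersolution of $-\Delta_p v=0$ in $\dom$. By the facts recalled after Definition \ref{def:div-harmonic}, such a $u_k$ admits a $p$-superharmonic representative, and $u=\sup_k u_k$ is then an increasing pointwise limit of $p$-superharmonic functions, hence itself $p$-superharmonic. Because the minimum of two viscosity supersolutions is a viscosity supersolution, $u_k$ is itself a bounded viscosity supersolution, so this reduction is legitimate.

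To see that $u_k$ is a weak supersolution, I would regularize it by the inf-convolution
\[
u_{k,\eps}(x) := \inf_{y\in\dom}\Bigl\{u_k(y)+\tfrac{1}{2\eps}\abs{x-y}^2\Bigr\},
\]
which on a slightly smaller subdomain $\dom_\eps\subset\dom$ is locally Lipschitz, semiconcave with constant $1/\eps$, increases monotonically to $u_k$ as $\eps\to 0$, and remains a viscosity supersolution by a standard argument. Alexandrov's theorem gives that $u_{k,\eps}$ is twice differentiable at almost every point. At such an $x_0$ with $Du_{k,\eps}(x_0)\neq 0$, a standard perturbation of the second-order Taylor polynomial yields an admissible $C^2$ test function for Definition \ref{def:div-viscosity}, and the viscosity condition produces the classical pointwise inequality $-\Delta_p u_{k,\eps}(x_0)\ge 0$; at a.e.\ point where the gradient vanishes the pointwise $p$-Laplacian also vanishes because $p\ge 2$. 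Hence $-\Delta_p u_{k,\eps}\ge 0$ almost everywhere in $\dom_\eps$.

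The main obstacle is to upgrade this pointwise inequality to its integral form,
\[
\int_{\dom_\eps}\abs{Du_{k,\eps}}^{p-2}Du_{k,\eps}\cdot D\psi\,dx \ge 0 \quad\text{for every } 0\le\psi\in C^\infty_0(\dom_\eps).
\]
For smooth $v$ one has $-\Delta_p v=-\operatorname{tr}\bigl(A(Dv)\,D^2 v\bigr)$ with $A(\xi):=\abs{\xi}^{p-2}I+(p-2)\abs{\xi}^{p-4}\xi\otimes\xi$, which for $p\ge 2$ is continuous on $\R^n$ (with the extension $A(0)=0$) and positive semidefinite. Mollifying $u_{k,\eps}$ to a smooth $u_{k,\eps}^\delta$ preserves the semiconcavity bound $D^2 u_{k,\eps}^\delta\le\eps^{-1}I$, so the family $\{-\Delta_p u_{k,\eps}^\delta\}_\delta$ is uniformly bounded from below on compact subsets of $\dom_\eps$, while $Du_{k,\eps}^\delta\to Du_{k,\eps}$ in every $L^q_{\text{loc}}$ identifies the distributional limit $-\Delta_p u_{k,\eps}$ as a signed Radon measure. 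Its absolutely continuous part coincides with the a.e.\ pointwise value (hence $\ge 0$), and its singular part is $\ge 0$ because $A(Du_{k,\eps})$ is PSD while the singular part of the distributional Hessian of a semiconcave function is negative semidefinite. Thus the measure is nonnegative, which is exactly the desired integral inequality.

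Finally I would let $\eps\to 0$: a Caccioppoli estimate derived from the integral inequality above yields uniform $W^{1,p}_{\text{loc}}$ bounds on $u_{k,\eps}$, and weak compactness together with Minty's monotonicity trick for the $p$-Laplace operator passes the supersolution inequality to $u_k$. The hardest step is the measure-theoretic argument in the third paragraph, and it is exactly there that the condition $p\ge 2$ is essential: the coefficient matrix $A(\xi)$ extends continuously across $\xi=0$ only in the non-singular regime, so Section \ref{sec:singular} will have to use a different approximation to handle $1<p<2$.
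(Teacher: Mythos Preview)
Your overall scheme---truncate to a bounded $u_k$, inf-convolve to a semiconcave viscosity supersolution $u_{k,\eps}$, derive the a.e.\ Aleksandrov inequality $-\Delta_p u_{k,\eps}\ge 0$, upgrade it to the integral inequality, and pass $\eps\to 0$---is exactly the paper's. The one genuine gap is in the upgrade step. Your assertion that the singular part of the distributional $-\diver(\abs{Du_{k,\eps}}^{p-2}Du_{k,\eps})$ is nonnegative ``because $A(Du_{k,\eps})$ is PSD while the singular part of the distributional Hessian of a semiconcave function is negative semidefinite'' tacitly uses a measure-level chain rule
\[
\diver\bigl(\abs{Du_{k,\eps}}^{p-2}Du_{k,\eps}\bigr)=\operatorname{tr}\bigl(A(Du_{k,\eps})\,[D^2u_{k,\eps}]\bigr).
\]
But $Du_{k,\eps}$ is only in $BV_{\text{loc}}$ and may be discontinuous precisely on the support of $[D^2u_{k,\eps}]_s$, so neither the product $A(Du_{k,\eps})\,[D^2u_{k,\eps}]_s$ nor this identity is available without substantial further argument; the same obstruction blocks your identification of the absolutely continuous part with the pointwise Aleksandrov value of $-\Delta_p u_{k,\eps}$.

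The paper sidesteps this measure-theoretic chain rule entirely. It mollifies the \emph{concave} part $\varphi=u_{k,\eps}-\tfrac{1}{2\eps}\abs{x}^2$ to smooth $\varphi_j$, invokes the Evans--Gariepy fact that for a concave function the mollified Hessians satisfy $D^2\varphi_j(x)\to D^2\varphi(x)$ for a.e.\ $x$, and then applies Fatou's lemma directly to the sequence $\int(-\Delta_p u_{\eps,j})\psi\,dx$ (the uniform lower bound you also observed makes Fatou legitimate). This yields
\[
\int\abs{Du_{k,\eps}}^{p-2}Du_{k,\eps}\cdot D\psi\,dx\ge\int(-\Delta_p u_{k,\eps})\psi\,dx\ge 0
\]
with no need to decompose any limiting measure. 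In short, the ingredient your argument is missing is exactly this a.e.\ convergence of mollified second derivatives; it does the work you were hoping the measure decomposition would do.
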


\begin{proof} 
By replacing $u$ by $\min(u,k)$ for $k\in\R$, we may assume that $u$ is locally bounded. Indeed, 
it is easy to check that also $\min(u,k)$ is a viscosity supersolution to \eqref{eq:plap}, and 
if we can show that it is $p$-superharmonic, the $p$-superharmonicity of $u$ follows easily from Definition \ref{def:div-harmonic},
see e.g. \cite{hkm}. 

We use the standard inf-convolution
\begin{equation}
\label{inf-conv}
u_\eps(x)= \inf_{y\in\dom} \left( u(y)+\frac1{2\eps}\abs{x-y}^2 \right),
\end{equation}
that is, the convolution (\ref{inf-conv-G}) with $q=2$. According to Lemma \ref{infprop}, 
$(u_\eps)$ is an increasing sequence of semiconcave viscosity supersolutions to \eqref{eq:plap} in $\Omega_{r(\eps)}$ 
which converge pointwise to $u$. In particular, the function 
\[
\varphi(x) = u_{\eps}(x) - C|x|^2
\]
(where we may take $C = \frac{1}{2 \eps}$) is concave in $\Omega_{r(\eps)}$. 

By Aleksandrov's theorem, $u_\eps$ is twice differentiable a.e. and we have\footnote{It follows 
from the definitions that the pair $(Du_\eps(x), D^2u_\eps(x))$ belongs 
to the second order ``jets'' $J^{2,+}u(x)$ and $J^{2,-}u(x)$ at the points of twice differentiability.}    
\begin{equation}\label{claim1}
-\Delta_p u_\eps = -\abs{Du_\eps}^{p-2}\left(\Delta u_\eps +(p-2) D^2 u_\eps 
\frac{Du_\eps}{\abs{Du_\eps}}\cdot\frac{Du_\eps}{\abs{Du_\eps}}\right) \ge 0
\end{equation}
a.e. in $\Omega_{r(\eps)}$. Here $D^2 u_\eps(x)$ is the Hessian matrix (in the sense of 
Aleksandrov) of $u_\eps$ at $x$, and $\Delta u_\eps(x)$ denotes the trace of this matrix.

Owing to \eqref{claim1} and \cite[Theorem 2.3]{l}, it is enough to prove that
\begin{equation}\label{enough}
 \int_\dom \abs{Du_\eps}^{p-2} Du_\eps\cdot D\psi\, dx  \geq \int_\dom (-\Delta_p u_\eps)\psi\, dx
\end{equation}
for any non-negative $\psi\in C^\infty_0(\dom)$. Let us fix such a function $\psi$. Notice that by Lemma \ref{infprop}, 
$ \text{supp} \, \psi \subset \Omega_{r(\eps)}$ when $\eps$ is small. Let $\varphi_j$ be a sequence of smooth concave functions
converging to $\varphi$, obtained via standard mollification, and let $u_{\eps,j}=\varphi_j+\frac1{2\eps}\abs{x}^2$. 
Integration by parts gives
\begin{equation}\label{div_thm}
\int_\dom \abs{Du_{\eps,j}}^{p-2} Du_{\eps,j}\cdot D\psi\, dx = \int_\dom (-\Delta_p u_{\eps,j})\psi\, dx .
\end{equation}
Since $u_\eps$ is locally Lipschitz continuous, we clearly have
\[
\lim_{j\to\infty} \int_\dom \abs{Du_{\eps,j}}^{p-2} Du_{\eps,j}\cdot D\psi\, dx 
= \int_\dom \abs{Du_\eps}^{p-2} Du_\eps\cdot D\psi\, dx.
\]
On the other hand, since $D^2 u_{\eps,j}\le \frac1\eps I$ and $Du_{\eps,j}$ is locally bounded, we have
\[
-\Delta_p u_{\eps,j} \ge  -\frac{C^{p-2}(n+p-2)}\eps
\]
in the support of $\psi$. Thus, by Fatou's lemma,
\begin{equation}
\label{fatou}
\liminf_{j\to\infty} \int_\dom (-\Delta_p u_{\eps,j})\psi\, dx 
\ge \int_\dom \liminf_{j\to\infty} (-\Delta_p u_{\eps,j})\psi\, dx.
\end{equation}
Finally, it is shown in \cite[p. 242]{eg}, that $D^2 \varphi_j(x)\to D^2\varphi(x)$ for a.e. $x$, and thus
\[
\liminf_{j\to\infty} (-\Delta_p u_{\eps,j}(x)) = -\Delta_p u_{\eps}(x)
\] 
for a.e. $x$. Putting everything together, we have
\[
\begin{split}
\int_\dom \abs{Du_\eps}^{p-2} Du_\eps\cdot D\psi\, dx 
=& \lim_{j\to\infty} \int_\dom \abs{Du_{\eps,j}}^{p-2} Du_{\eps,j}\cdot D\psi\, dx \\
=& \lim_{j\to\infty} \int_\dom (-\Delta_p u_{\eps,j})\psi\, dx\\
\ge& \int_\dom \liminf_{j\to\infty} (-\Delta_p u_{\eps,j})\psi\, dx\\
=& \int_\dom (-\Delta_p u_\eps)\psi\, dx,
\end{split}
\]
as desired.
\end{proof}

The argument used to prove Theorem \ref{main1} extends without much difficulties for the non-homogeneous equation
$-\Delta_p u(x)=f(x)$, where $f$ is any continuous function.    

\begin{theorem}\label{fx1}
Let $u$ be a locally bounded viscosity supersolution to $-\Delta_p u(x)=f(x)$ in $\Omega$. Then $u$
is also a weak supersolution to the same equation. 
\end{theorem}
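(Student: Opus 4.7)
The plan is to adapt the argument of Theorem \ref{main1} in a fairly direct way, the only genuine novelty appearing at the very end, where we must pass to the limit $\eps\to 0$ without the benefit of the potential-theoretic black box (identification of $p$-superharmonic functions with weak supersolutions) that wrapped up the homogeneous case.

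First, I would apply the inf-convolution of Lemma \ref{infprop}, which produces an increasing family of semiconcave viscosity supersolutions $u_\eps$ to the perturbed equation $-\Delta_p u_\eps = f_\eps$ in $\Omega_{r(\eps)}$, where $f_\eps(x) := \inf_{y \in B_{r(\eps)}(x)} f(y)$ converges locally uniformly to $f$ by the continuity of $f$. The mollification and Fatou argument of Theorem \ref{main1} (mollify $\varphi = u_\eps - \tfrac{1}{2\eps}\abs{x}^2$, integrate by parts at the smooth level, let $j\to\infty$ using the uniform lower bound on $-\Delta_p u_{\eps,j}$) carries over with no essential change, since the pointwise inequality $-\Delta_p u_\eps \geq f_\eps$ a.e.\ enters only as a final lower bound. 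This produces
\[
\int_\dom \abs{Du_\eps}^{p-2} Du_\eps \cdot D\psi\,dx \;\geq\; \int_\dom f_\eps\,\psi\,dx
\]
for every nonnegative $\psi \in C^\infty_0(\dom)$ supported in $\Omega_{r(\eps)}$.

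To transfer this to $u$, I would derive a Caccioppoli bound by testing the above inequality with $\psi = \eta^p(M - u_\eps)$, where $\eta$ is a smooth cutoff and $M$ is a local upper bound for $u$ (available by local boundedness). Standard manipulation yields a uniform estimate $\|Du_\eps\|_{L^p(K)} \leq C(K)$ for every $K \subset\subset \dom$. Combined with the monotone convergence $u_\eps \uparrow u$ in $L^p_{\text{loc}}$, this gives $u \in W^{1,p}_{\text{loc}}(\dom)$ and $Du_\eps \rightharpoonup Du$ weakly in $L^p_{\text{loc}}$.

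The main obstacle is to pass the nonlinear flux $\abs{Du_\eps}^{p-2}Du_\eps$ through the limit, since weak convergence of $Du_\eps$ is not sufficient. I would handle this by the standard monotonicity trick: testing the integrated inequality with $\psi = \eta^p(u - u_\eps) \geq 0$ and exploiting the strong $L^p_{\text{loc}}$ convergence $u_\eps \to u$, one concludes
\[
\limsup_{\eps\to 0} \int_\dom \eta^p \abs{Du_\eps}^{p-2}Du_\eps \cdot (Du_\eps - Du)\,dx \;\leq\; 0;
\]
combined with the monotonicity inequality $(\abs{\xi}^{p-2}\xi - \abs{\zeta}^{p-2}\zeta)\cdot(\xi - \zeta) \geq c\abs{\xi-\zeta}^p$, valid for $p \geq 2$, this forces $Du_\eps \to Du$ strongly in $L^p_{\text{loc}}$. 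Letting $\eps\to 0$ in the displayed inequality, with the uniform convergence $f_\eps \to f$ on the support of $\psi$, then delivers the required weak supersolution inequality $\int_\dom \abs{Du}^{p-2} Du \cdot D\psi\,dx \geq \int_\dom f\,\psi\,dx$ for every nonnegative $\psi \in C^\infty_0(\dom)$.
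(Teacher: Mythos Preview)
Your proposal is correct and follows essentially the same route as the paper: inf-convolve, repeat the mollification/Fatou argument of Theorem~\ref{main1} to obtain $\int_\dom \abs{Du_\eps}^{p-2} Du_\eps\cdot D\psi\, dx \ge \int_\dom f_\eps\psi\, dx$, and then let $\eps\to 0$ using Caccioppoli-type estimates. The paper compresses the entire limit passage into the phrase ``in view of standard Caccioppoli estimates,'' whereas you spell out the details (uniform $W^{1,p}_{\text{loc}}$ bound via the test function $\eta^p(M-u_\eps)$, then strong convergence of $Du_\eps$ via the monotonicity trick with $\eta^p(u-u_\eps)$); this is exactly what ``standard'' means here, so your expanded account is a faithful realization of the paper's sketch.
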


\begin{proof}
Let $u_\eps$ be the standard inf-convolution of $u$ as above. Then $u_\eps$ is a semi-concave viscosity supersolution
to
\[
-\Delta_ p v(x)=f_\eps(x) \qquad \text{in} \, \, \Omega_{r(\eps)},
\]  
where $f_\eps(x)= \inf\limits_{y\in B_{r(\eps)}(x)} f(y)$. Arguing as in the proof of Theorem \ref{main1},
we obtain
\[
\int_\dom \abs{Du_\eps}^{p-2} Du_\eps\cdot D\psi\, dx  \ge \int_\dom (-\Delta_p u_\eps)\psi\, dx 
\ge \int_\dom f_\eps \psi\, dx.
\]
In view of standard Caccioppoli estimates, the claim now follows by letting $\eps\to 0$.
\end{proof}

\begin{remark} One of the crucial points in the proof of Theorem \ref{main1} is the convergence
of $D^2 \varphi_j(x)$ to $D^2\varphi(x)$ for a.e. $x\in\Omega$. In fact, this holds if
\begin{enumerate}
 \item $\varphi$ is differentiable at $x$ and $x$ is a Lebesgue point for $D\varphi$,
\item $x$ is a Lebesgue point for the absolutely continuous
part of the measure $[D^2\varphi]$, and
\item $x$ is not a density point for the singular part of the measure $[D^2\varphi]$.
\end{enumerate}
See \cite[Section 6.3]{eg} for details.
\end{remark}

\section{The singular case $1< p <2$.}\label{sec:singular}

In this section, we show that the argument of the previous section can be extended to cover the case $1<p<2$ as well.
However, in this range, the $p$-Laplace operator is singular at the points where the gradient vanishes, and this
fact causes several difficulties that we did not encounter before.

The first problem is that it is not obvious what is the meaning of the expression 
\[
\Delta_p u=\abs{Du}^{p-2} \left(\Delta u+(p-2)D^2u \tfrac{Du}{\abs{Du}}\cdot \tfrac{Du}{\abs{Du}}\right)
\]
when $Du=0$. Because of this, we cannot just integrate by parts and conclude that
\[
 \int_\dom (-\Delta_p u)\psi\, dx = \int_\dom \abs{Du}^{p-2} Du\cdot D\psi\, dx
\]
holds for smooth functions, cf. \eqref{div_thm}. To circumvent this, we regularize the equation 
and use the identity
\[
\int_\dom (-\diver((\abs{Du}^2+\delta)^{\frac{p-2}2} Du))\psi\, dx = \int_\dom (\abs{Du}^2+\delta)^{\frac{p-2}2} Du\cdot D\psi\, dx,
\]
and eventually try to pass to the limit as $\delta\to 0$.

Second, even though the standard inf-convolution \eqref{inf-conv} produces 
semi-concave supersolutions, this doesn't imply directly that the expressions
\[
-\Delta_p u_\eps(x)
\]
have an integrable lower bound. Therefore we cannot justify the use 
of Fatou's Lemma as we did in \eqref{fatou}. To overcome this problem, we 
use a slightly different inf-convolution which will cancel out the singularity of the operator. 

Let us now make all this precise. We consider the equation
\begin{equation}
 \label{poisson}
-\Delta_p u= f
\end{equation}
in a domain $\Omega\subset\R^n$, and assume that $f\in C(\Omega)$ and $1<p<2$.

\begin{theorem}
\label{Main-thm-p<2}
If $u$ is a bounded viscosity supersolution to (\ref{poisson}) in $\Omega$, then it is a weak supersolution to (\ref{poisson}) 
in $\Omega$.
\end{theorem}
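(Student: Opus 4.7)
The plan mirrors the proof of Theorem \ref{main1}, but must address the two difficulties flagged in this section: $-\Delta_p u$ is not pointwise defined where $Du=0$, and the standard semi-concavity bound for the quadratic inf-convolution does not produce an integrable lower bound for $-\Delta_p u^\eps$ when $p<2$. I would resolve the first by working with the regularized operator $-\diver((\abs{Dv}^2+\delta)^{(p-2)/2}Dv)$, which is pointwise defined for all values of $Dv$, and the second by replacing the quadratic inf-convolution by one whose exponent $q=q(p)>2$ is chosen precisely to cancel the singular factor $\abs{Du^\eps}^{p-2}$.

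Concretely, I would replace \eqref{inf-conv} by
\[
u^\eps(x)=\inf_{y\in\dom}\Bigl(u(y)+\tfrac{1}{q\eps^{q-1}}\abs{x-y}^q\Bigr)
\]
for a fixed $q>p/(p-1)$. The $q$-analogue of Lemma \ref{infprop} should give that $u^\eps$ is locally Lipschitz, semi-concave, increases monotonically to $u$ on $\dom_{r(\eps)}$, and is a viscosity supersolution of $-\Delta_p v = f_\eps$, where $f_\eps(x)=\inf_{B_{r(\eps)}(x)}f$. The essential gain from this choice is the Aleksandrov-Hessian bound $D^2u^\eps(x)\le \tfrac{q-1}{\eps}\abs{Du^\eps(x)}^{(q-2)/(q-1)}I$ (derived from the envelope theorem applied to the penalty $\tfrac{1}{q\eps^{q-1}}\abs{x-y^*(x)}^q$), which, when paired with $\abs{Du^\eps}^{p-2}$ or its $\delta$-regularization, yields the a.e.\ lower bound
\[
-\diver\bigl((\abs{Du^\eps}^2+\delta)^{(p-2)/2}Du^\eps\bigr)\ \ge\ -C_\eps\,\bigl(\abs{Du^\eps}^2+\delta\bigr)^{[q(p-1)-p]/(2(q-1))}\ \ge\ -C_\eps,
\]
uniformly in $\delta\in(0,1]$, precisely because the exponent $q(p-1)-p\ge 0$.

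With this a priori lower bound in hand, the rest of the argument parallels Theorem \ref{main1}. For each fixed $\eps$, I would mollify $u^\eps$ to obtain smooth $u^\eps_j\to u^\eps$ in $W^{1,\infty}_{\text{loc}}$ with $D^2u^\eps_j(x)\to D^2u^\eps(x)$ a.e.\ (the Remark after Theorem \ref{fx1}). Integration by parts for the smooth, non-singular, regularized operator against a non-negative $\psi\in C^\infty_0(\dom)$ is classical, and sending $j\to\infty$ using dominated convergence on the gradient side and Fatou on the divergence side (justified by the uniform-in-$j$ lower bound) gives
\[
\int_\dom (\abs{Du^\eps}^2+\delta)^{\frac{p-2}{2}}Du^\eps\cdot D\psi\,dx\ \ge\ \int_\dom \bigl(-\diver((\abs{Du^\eps}^2+\delta)^{\frac{p-2}{2}}Du^\eps)\bigr)\psi\,dx.
\]
Using the viscosity-supersolution property of $u^\eps$ at Aleksandrov points with $Du^\eps\ne 0$, the divergence-side integrand is at least $f_\eps$ up to a $\delta$-dependent perturbation. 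Passing first $\delta\to 0$ (dominated convergence on the gradient side, the uniform lower bound on the divergence side) and then $\eps\to 0$ (Caccioppoli as in Theorem \ref{fx1}) yields the weak-supersolution inequality.

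The main obstacle will be the $\delta\to 0$ step on the divergence side, and in particular the contribution from the set $\{Du^\eps=0\}$, which in general has positive measure. This is exactly the point where the modified inf-convolution is indispensable: the upper Hessian bound $D^2 u^\eps\le \tfrac{q-1}{\eps}\abs{Du^\eps}^{(q-2)/(q-1)}I$ vanishes on the critical set, so the apparently singular factor $\delta^{(p-2)/2}$ multiplies something of order $\delta^{(q-2)/(2(q-1))}$, and the condition $q\ge p/(p-1)$ keeps the product uniformly bounded as $\delta\to 0$. Transferring the pointwise viscosity inequality to Aleksandrov points is a secondary subtlety, handled exactly as in the proof of Theorem \ref{main1} via the footnoted observation that $(Du^\eps(x),D^2u^\eps(x))\in J^{2,-}u^\eps(x)$ at twice-differentiability points.
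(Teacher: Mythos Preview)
Your approach is essentially the paper's: the $q$-inf-convolution with $q>p/(p-1)$, the Hessian bound $D^2u_\eps\le\tfrac{q-1}{\eps}\abs{Du_\eps}^{(q-2)/(q-1)}I$, the $\delta$-regularization, the mollification, and the two Fatou passages are exactly the ingredients the paper uses. However, there is one genuine gap in your handling of the critical set $\{Du_\eps=0\}$.

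Your lower bound correctly makes Fatou applicable, and in fact on $\{Du_\eps=0\}$ the Hessian bound gives $D^2u_\eps\le 0$, so the regularized divergence is $\ge 0$ there (your description in terms of ``something of order $\delta^{(q-2)/(2(q-1))}$'' is slightly off, but the conclusion is right). The problem is what this yields after $\delta\to 0$: on $\{Du_\eps\ne 0\}$ the pointwise viscosity inequality delivers $f_\eps$, while on $\{Du_\eps=0\}$ you only get a nonnegative contribution. Thus you arrive at
\[
\int_\Omega \abs{Du_\eps}^{p-2}Du_\eps\cdot D\psi\,dx\ \ge\ \int_{\Omega\setminus\{Du_\eps=0\}} f_\eps\,\psi\,dx,
\]
and since the critical set can have positive measure, nothing you have written rules out $f_\eps>0$ there. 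If it were, this inequality would fall strictly short of $\int_\Omega f_\eps\psi\,dx$.

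The missing piece is Lemma~\ref{p<2lemma2}: at any differentiability point $\hat x$ with $Du_\eps(\hat x)=0$ one has $f(\hat x)\le 0$, hence $f_\eps(\hat x)\le 0$. This is proved by returning to $u$ itself. By Lemma~\ref{lemma_p<2}(ii), $Du_\eps(\hat x)=0$ forces the infimum in the convolution to be attained at $y=\hat x$, so $u_\eps(\hat x)=u(\hat x)$ and $\varphi(y)=u(\hat x)-\tfrac{1}{q\eps^{q-1}}\abs{\hat x-y}^q$ touches $u$ from below at $\hat x$. Because $q>p/(p-1)$, a direct computation gives $\limsup_{y\to\hat x}(-\Delta_p\varphi(y))=0$, and the viscosity supersolution property of $u$ then forces $f(\hat x)\le 0$. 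With this in hand, $\int_{\{Du_\eps=0\}}f_\eps\psi\,dx\le 0$ and your chain closes. Note that this step, too, uses the strict inequality $q>p/(p-1)$, not merely $q\ge p/(p-1)$.
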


The convolution we will use is
\begin{equation}
 \label{inf-conv-2}
u_{\eps}(x) = \inf_{y \in \Omega} \left( \, u(y) + \frac{|x-y|^q}{q \, \eps^{q-1}} \, \right)
\end{equation}
where $q > \frac{p}{p-1}$. Notice that $\frac{p}{p-1}$ is the dual exponent of $p$, and as $1<p<2$, we have $q> 2$. 
By Lemma \ref{infprop}, $u_{\eps}$ is semiconcave and therefore it is twice differentiable almost everywhere. The main advantage in having $q> 2$ is that this will force the Hessian $D^2u_{\eps}(x)$ to be negative semidefinite whenever the gradient $D u_{\eps}(x)$ vanishes. 

We make two observations (Lemmas \ref{lemma_p<2} and \ref{p<2lemma2} below)
about the inf-convolution \eqref{inf-conv-2} before entering the proof of Theorem \ref{Main-thm-p<2}.
The proofs for these observations are given in Appendix \ref{sec:infconv}, where other basic results 
regarding inf-convolutions are also discussed.

First we need to introduce some notation.

\begin{definition} For a bounded, lower semicontinuous function $u\colon\Omega\to\R$ and $x\in\Omega$, we define the set
$Y_{\eps}(x)$ by saying that $y \in Y_{\eps}(x)$ if  
\[
u_{\eps}(x) = u(y) + \frac{1}{q \, \eps^{q-1}}|x-y|^q .
\]
\end{definition}
As $u$ is lower semicontinuous and bounded, the set $Y_{\eps}(x)$ is nonempty and closed for every 
$x \in \Omega_{r(\eps)}$.

\begin{lemma}
\label{lemma_p<2}
Suppose that $u\colon \Omega \to \R$ is bounded and lower semicontinuous. Let $u_{\eps}$ be the inf-concolution of $u$ as in 
\eqref{inf-conv-2} and let $Y_{\eps}(x)$ be the set defined as above. Then the following are true:
\begin{itemize}

 \item[(i)] The function  $x \mapsto \max\limits_{y \in Y_{\eps}(x)} |y-x|$ is upper semicontinuous.

  \item[(ii)] If the gradient $D u_{\eps}(x)$ exists, then for every $y \in Y_{\eps}(x)$ it holds 
\[
\left(\frac{|x -y|}{\eps}\right)^{q-1}  \leq |D u_{\eps}(x)| .
\]
In particular, if $D u_{\eps}(x)= 0$, then $u_{\eps}(x) = u(x)$.
\end{itemize}

\end{lemma}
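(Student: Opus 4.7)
The plan is to dispatch (ii) first, since it is purely local, and then derive (i) from boundedness of $u$ together with the lower semicontinuity hypothesis.

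For (ii), fix $y \in Y_\eps(x)$. The case $y = x$ is trivial, so assume $y \ne x$ and consider the smooth function $g(z) := u(y) + |z-y|^q/(q\eps^{q-1})$. By the definition of $u_\eps$ one has $g(z) \ge u_\eps(z)$ throughout $\dom$, while the defining property of $Y_\eps(x)$ gives $g(x) = u_\eps(x)$. Thus $g - u_\eps$ attains its minimum at $x$. Since $u_\eps$ is differentiable at $x$ by assumption and $g$ is smooth near $x$ (thanks to $y \ne x$), the gradients must agree: $Du_\eps(x) = Dg(x)$. Taking norms yields $|Du_\eps(x)| = (|x-y|/\eps)^{q-1}$, which is slightly stronger than the claim. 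The ``in particular'' statement follows at once, since $Du_\eps(x) = 0$ forces $|x-y| = 0$ for every $y \in Y_\eps(x)$, so $Y_\eps(x) = \{x\}$ and $u_\eps(x) = u(x)$.

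For (i), write $h(x) := \max_{y \in Y_\eps(x)} |y-x|$ and let $x_k \to x$. Choose $y_k \in Y_\eps(x_k)$ realising $|y_k - x_k| = h(x_k)$. Since $u$ is bounded and $u_\eps(x_k) \le u(x_k)$ (test the infimum with $y = x_k$), the distances $|x_k - y_k|$ are uniformly bounded. After passing to a subsequence along which $h(x_k) \to \limsup_k h(x_k)$ and then a further subsequence so that $y_k \to y_0$, the key step is to verify $y_0 \in Y_\eps(x)$. This follows by letting $k \to \infty$ in the identity $u_\eps(x_k) = u(y_k) + |x_k - y_k|^q/(q\eps^{q-1})$: continuity of $u_\eps$ fixes the left-hand side, continuity of the Euclidean term fixes its contribution on the right, so $u(y_k)$ must converge; lower semicontinuity of $u$ then gives $u(y_0) \le \lim_k u(y_k)$, and the defining inequality of $u_\eps$ supplies the matching upper bound. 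Consequently $y_0 \in Y_\eps(x)$ and $\limsup_k h(x_k) = |y_0 - x| \le h(x)$, as required.

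The only subtle point is the limit passage in (i): lower semicontinuity of $u$ alone would give only an inequality, so the argument leans on the identity defining $Y_\eps(x_k)$ to force convergence of the sequence $u(y_k)$ before lower semicontinuity is invoked. Apart from this, both parts of the lemma amount to a direct unwrapping of the definitions, and no delicate analytic estimate is required.
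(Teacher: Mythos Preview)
Your proof is correct and follows essentially the same route as the paper: for (i) both arguments pick maximizers $y_k\in Y_\eps(x_k)$, pass to a cluster point $y_0$, and verify $y_0\in Y_\eps(x)$, while for (ii) both exploit that the barrier $g(z)=u(y)+\tfrac{|z-y|^q}{q\eps^{q-1}}$ touches $u_\eps$ from above at $x$. The only noteworthy difference is that in (ii) the paper computes a one-sided directional derivative along $e=(y-x)/|y-x|$ to obtain the \emph{inequality} $Du_\eps(x)\cdot e\le -(|x-y|/\eps)^{q-1}$, whereas your touching argument yields the full identity $Du_\eps(x)=Dg(x)$ and hence the \emph{equality} $|Du_\eps(x)|=(|x-y|/\eps)^{q-1}$, which is slightly sharper and a bit cleaner.
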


\begin{lemma}
\label{p<2lemma2}
Suppose that $u$ is a bounded viscosity supersolution to (\ref{poisson}) in $\Omega$. If there is $\hat{x} \in \Omega_{r(\eps)}$ such that $u_{\eps}$ is differentiable at $\hat{x}$ and 
\[
 D u_{\eps}(\hat{x}) = 0,
\]
then $f(\hat{x}) \leq 0$. 
\end{lemma}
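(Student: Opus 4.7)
The strategy is to exploit $Du_\eps(\hat x)=0$ to realize $\hat x$ as the unique minimizer in the infimum defining $u_\eps(\hat x)$, and then to use the kernel of that inf-convolution itself as a $C^2$ test function for $u$ at $\hat x$.

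First I invoke Lemma \ref{lemma_p<2}(ii): if $Du_\eps(\hat x)=0$, then every $y \in Y_\eps(\hat x)$ satisfies $(\abs{y-\hat x}/\eps)^{q-1}\le 0$, so $Y_\eps(\hat x)=\{\hat x\}$ and hence $u_\eps(\hat x)=u(\hat x)$. Unwinding the definition of $u_\eps$, this gives, for every $y \in \dom$,
\[
u(y) \;\ge\; u(\hat x) - \frac{\abs{y-\hat x}^q}{q\,\eps^{q-1}},
\]
with equality only at $y=\hat x$. Therefore the smooth function
\[
\phi(y) \;:=\; u(\hat x) - \frac{\abs{y-\hat x}^q}{q\,\eps^{q-1}}
\]
touches $u$ from below at $\hat x$. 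Since $1<p<2$ forces $q > p/(p-1) > 2$, we have $\phi \in C^2(\dom)$, and $D\phi(y) = -\eps^{-(q-1)}\abs{y-\hat x}^{q-2}(y-\hat x)$ vanishes only at $\hat x$. Hence $\phi$ is an admissible test function in Definition \ref{def:div-viscosity}.

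Next I would compute $-\Delta_p\phi$ by a direct radial calculation: for $y \ne \hat x$ one obtains
\[
-\Delta_p\phi(y) \;=\; \bigl[\,n - p + q(p-1)\,\bigr]\,\eps^{-(q-1)(p-1)}\,\abs{y-\hat x}^{\,q(p-1)-p}.
\]
The bracketed coefficient is positive, and, decisively, the exponent $q(p-1)-p$ is strictly positive exactly because $q > p/(p-1)$. Thus $-\Delta_p\phi(y) \to 0$ as $y \to \hat x$, so that
\[
\lim_{r \to 0}\sup_{y \in B_r(\hat x)}(-\Delta_p\phi(y)) \;=\; 0,
\]
and plugging $\phi$ into the viscosity supersolution inequality for $u$ at $\hat x$ yields $0 \ge f(\hat x)$, as required.

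The main conceptual step is recognizing that the inf-convolution kernel itself already supplies a $C^2$ test function with non-vanishing gradient outside $\hat x$; once this is seen, the remainder is a short calculation. The condition $q > p/(p-1)$ in \eqref{inf-conv-2} is exactly calibrated for this lemma: the weaker $q>2$ alone would suffice for $C^2$-regularity of $\phi$, but it is $q(p-1)>p$ that makes $-\Delta_p\phi$ vanish (from the correct side) at the contact point. This is the algebraic realisation of the idea, announced at the start of Section \ref{sec:singular}, that an exponent-$p$-dependent inf-convolution ``cancels out'' the singularity.
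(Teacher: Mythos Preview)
Your proposal is correct and follows essentially the same approach as the paper: both use Lemma~\ref{lemma_p<2}(ii) to deduce $u_\eps(\hat x)=u(\hat x)$, then take $\phi(y)=u(\hat x)-\tfrac{1}{q\eps^{q-1}}\abs{y-\hat x}^q$ as an admissible $C^2$ test function touching $u$ from below at $\hat x$, and finally use $q>p/(p-1)$ to conclude that the limsup of $-\Delta_p\phi$ at $\hat x$ is zero. Your write-up is in fact more explicit than the paper's, which just records that ``an easy calculation yields'' the limit without displaying the formula for $-\Delta_p\phi$.
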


\begin{proof}[Proof of Theorem \ref{Main-thm-p<2}] 
The outline of the proof is the same as in the case $p \geq 2$. Let $u_\eps$ be the inf-convolution of $u$,
defined using \eqref{inf-conv-2}. Then $u_\eps$ is a semiconcave viscosity supersolution to 
\[
 -\Delta_p u_{\eps} = f_{\eps}
\]
in $\Omega_{r(\eps)}$. We will show that for any $\eps > 0$, $u_{\eps}$ is also a weak supersolution to the 
same equation.

By Aleksandrov's theorem, we have
\begin{equation}
\label{claim2}
\begin{split}
-\Delta_p u_\eps &= -\diver \left(\abs{Du_\eps}^{p-2} Du_\eps \right) \\
			&= -\abs{Du_\eps}^{p-2}\left(\Delta u_\eps +(p-2) D^2 u_\eps 
\frac{Du_\eps}{\abs{Du_\eps}}\cdot\frac{Du_\eps}{\abs{Du_\eps}}\right) \\
 &\ge f_{\eps}
\end{split}
\end{equation}
a.e. in $\Omega_{r(\eps)} \backslash \{ Du_\eps = 0\}$.
Since $u_{\eps}$ is semiconcave, we can combine a mollification argument with Fatou's Lemma as in \eqref{fatou} 
to obtain
\begin{equation}
\label{regular}
  \int_\dom (\abs{Du_\eps}^2 + \delta )^{\frac{p-2}{ 2}} Du_\eps \cdot D\psi\, dx \geq \int_\dom -\diver \left((\abs{Du_\eps}^2 + \delta )^{\frac{p-2}{ 2}} Du_\eps \right) \,  \psi\, dx
\end{equation}
for $\psi \in C^\infty_0(\dom)$, $ \psi \geq 0$ and for every $\delta > 0$. The goal is to let $\delta \to 0$ in \eqref{regular} and conclude that 
\[
 \int_\dom \abs{Du_\eps}^{p-2} Du_\eps \cdot D\psi\, dx \geq \int_\dom f_{\eps} \,  \psi\, dx.
\]

The convergence of the left-hand side in (\ref{regular}) is fine, but the right-hand side needs an additional argument. 
We will show that 
\begin{equation}
\label{fatou2}
\begin{split}
 & -\diver((\abs{Du_\eps}^2 + \delta )^{\frac{p-2}{ 2}} Du_\eps) \\
 &= -(\abs{Du_\eps}^2 + \delta )^{\frac{p-2}{ 2}}\left(\Delta u_\eps +\frac{(p-2)}{\abs{Du_\eps}^2 + \delta} D^2 u_\eps 
Du_\eps \cdot Du_\eps \right) \\
&\geq  - C
\end{split}
\end{equation}
a.e. in $\Omega_{r(\eps)}$, where $C$ is a constant independent of $\delta$, and use Fatou's Lemma again.  

To show \eqref{fatou2}, consider a point $\hat{x}$ where both $Du_\eps(\hat{x})$ and $D^2 u_\eps(\hat{x})$ exist. 
By Lemma \ref{lemma_p<2} (ii), we have $|y-\hat{x}|\leq |Du_\eps(\hat{x})|^{\frac{1}{q-1}} \eps$ for every $y \in Y_{\eps}(\hat{x})$. 
Moreover, by the upper semicontinuity result in Lemma \ref{lemma_p<2} (i), we know that for every 
$n$ there is a small radius $\rho_n$ such that for all $x \in B_{\rho_n}(\hat{x})$ and for all $y \in Y_{\eps}(x)$ it holds 
\[
 |y-x|\leq |Du_\eps(\hat{x})|^{\frac{1}{q-1}} \eps + \frac{1}{n} =: r_n.
\]
This implies that for every $x \in B_{\rho_n}(\hat{x})$ we have 
\[
u_{\eps}(x) = \inf_{y \in B_{ r_n}(x)} \left( \, u(y) + \frac{|x-y|^q}{q \, \eps^{q-1}} \, \right).
\]
For every $y \in B_{ r_n}(x)$, the function $\varphi_y(x) = u(y) + \frac{|x-y|^q}{q \, \eps^{q-1}}$ is smooth with 
\[
D^2\varphi_y(x) \leq \frac{q-1}{\eps^{q-1}} \,  r_n^{q-2} I.
\]
Since $u_\eps$ is the infimum of $\varphi_y$'s over $y \in B_{ r_n}(x)$, we conclude, as in the proof of Lemma~\ref{semiconcave}, 
that $u_\eps$ is semiconcave and
\[
D^2 u_\eps(x) \leq \frac{q-1}{\eps^{q-1}} \,  r_n^{q-2} I
\]
a.e. in $B_{\rho_n}(\hat{x})$. Letting $n \to \infty$ yields the estimate
\begin{equation}
\label{bound}
 D^2 u_\eps(\hat{x}) \leq \frac{q-1}{\eps} | Du_\eps(\hat{x})|^{\frac{q-2}{q-1}}I.
\end{equation}

The previous estimate proves \eqref{fatou2}. Indeed, by \eqref{bound} we have   
\begin{equation}
 \label{criticalset}
D^2 u_\eps(x) \leq 0 \qquad \text{if} \quad Du_\eps(x)= 0.
\end{equation}
If $Du_\eps(x) \neq 0$, we have 
\[
\begin{split}
 &-(\abs{Du_\eps}^2 + \delta )^{\frac{p-2}{ 2}}\left(\Delta u_\eps +\frac{(p-2)}{\abs{Du_\eps}^2 + \delta} D^2 u_\eps Du_\eps \cdot Du_\eps \right) \\
&\geq -\frac{(n+p-2)(q-1)}{\eps} \abs{Du_\eps}^{p-2 + \frac{q-2}{q-1}} \\
&\geq -C,
\end{split}
\]
where the last inequality follows from $q >\frac{p}{p-1}$ and the Lipschitz continuity of $u_\eps$. 
We may thus use Fatou's lemma to conclude that
\[
\begin{split}
&\liminf_{\delta \to 0} \int_\dom -\diver \left((\abs{Du_\eps}^2 + \delta )^{\frac{p-2}{ 2}} Du_\eps \right) \,  \psi\, dx \\
&\geq \int_{\dom \backslash \{ Du_\eps = 0\} }  \liminf_{\delta \to 0} \left( -\diver \left((\abs{Du_\eps}^2 + \delta )^{\frac{p-2}{ 2}} Du_\eps \right) \,  \psi\, \right) dx \\
&= \int_{\dom \backslash \{ Du_\eps = 0\} }  -\diver \left(\abs{Du_\eps}^{p-2} Du_\eps \right) \,  \psi\,  dx \\
&= \int_{\dom \backslash \{ Du_\eps = 0\} } -\Delta_p u_\eps  \,  \psi\,  dx \\
&\geq \int_{\dom \backslash \{ Du_\eps = 0\} } f_{\eps} \,  \psi\,  dx,
\end{split}
\]
where the first inequality follows from \eqref{criticalset} and Fatou's lemma, and the last inequality follows from \eqref{claim2}. Let $\delta \to 0$ in (\ref{regular}) to obtain
\[
\int_\dom \abs{Du_\eps}^{p-2} Du_\eps \cdot D\psi\, dx \geq \int_{\dom \backslash \{ Du_\eps = 0\} } f_{\eps} \,  \psi\,  dx.
\]
Finally, we use Lemma \ref{p<2lemma2} to conclude that $f_{\eps} \leq f \leq 0$ on the set $ \{ Du_\eps = 0\}$. Therefore 
\[
 \int_{\dom \backslash \{ Du_\eps = 0\} } f_{\eps} \,  \psi\,  dx  \geq \int_\dom  f_{\eps} \,  \psi\,  dx 
\]
and we are done.
\end{proof}

If $f\equiv 0$, then we have a more elegant statement that was already established for $p\ge 2$ in Theorem \ref{main1}.

\begin{theorem}
If $u$ is a viscosity supersolution to $-\Delta_p u=0$, then it is
$p$-superharmonic.
\end{theorem}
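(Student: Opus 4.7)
The plan is to mirror the reduction step performed in the proof of Theorem~\ref{main1}: apply the already-proved inhomogeneous result (Theorem~\ref{Main-thm-p<2}) with $f\equiv 0$ to the truncations $u_k := \min(u,k)$, and then assemble $p$-superharmonicity of $u$ from the $p$-superharmonicity of the bounded pieces.

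First I would verify that each $u_k$ remains a viscosity supersolution to $-\Delta_p v = 0$ in $\Omega$. At a contact point $x_0$ where $u_k(x_0) = u(x_0)$ this is immediate, since any admissible test function for $u_k$ is also an admissible test function for $u$. At a contact point where $u_k(x_0) = k < u(x_0)$, any $C^2$ function $\psi$ touching $u_k$ from below at $x_0$ must satisfy $\psi \leq k$ locally with $\psi(x_0) = k$, so $x_0$ is a local maximum of $\psi$; the required inequality $\limsup_{r \to 0}\sup_{B_r(x_0)}(-\Delta_p \psi) \geq 0$ then follows from $D\psi(x_0)=0$ and $D^2\psi(x_0) \leq 0$, at least after checking the singular limit (this is routine, cf.\ \cite{jlm}).

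Now $u_k$ is a bounded viscosity supersolution to $-\Delta_p v = 0$ in $\Omega$, so Theorem~\ref{Main-thm-p<2} yields $u_k \in W^{1,p}_{\mathrm{loc}}(\Omega)$ together with the weak supersolution inequality. Combining this with the standard fact (1) recalled in Section~\ref{sec:notions}, that every weak supersolution admits a lower semicontinuous $p$-superharmonic representative, and noting that $u_k$ is already lower semicontinuous, I conclude that $u_k$ is $p$-superharmonic for every $k \in \R$.

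Finally I would recover $p$-superharmonicity of $u$ by verifying Definition~\ref{def:div-harmonic} directly. Lower semicontinuity and non-triviality $u \not\equiv +\infty$ are part of the viscosity supersolution hypothesis. For the comparison principle, given $h$ $p$-harmonic in $D \subset\subset \Omega$, continuous on $\overline{D}$, with $u \geq h$ on $\partial D$, choose any $k > \max_{\overline{D}} h$. Then $u_k \geq h$ on $\partial D$, the $p$-superharmonicity of $u_k$ gives $u_k \geq h$ in $D$, and since $u \geq u_k$ we obtain $u \geq h$ in $D$. The only step that requires any genuine care is the first one, the stability of the viscosity supersolution property under truncation in the singular regime; everything else is a transparent consequence of Theorem~\ref{Main-thm-p<2} and the classical potential-theoretic facts.
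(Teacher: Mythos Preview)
Your plan mirrors exactly what the paper intends: it gives no separate proof for this corollary, treating it as immediate from Theorem~\ref{Main-thm-p<2} combined with the truncation reduction already used in Theorem~\ref{main1}. There is, however, a genuine gap in your third step. From ``$u_k$ is a weak supersolution'' and ``$u_k$ is lower semicontinuous'' you infer that $u_k$ itself is $p$-superharmonic, invoking fact~(1) from Section~\ref{sec:notions}. But fact~(1) only produces \emph{some} lower semicontinuous $p$-superharmonic representative (namely the ess\,liminf regularization); it does not assert that \emph{every} lower semicontinuous representative is $p$-superharmonic, and indeed this is false. Take the zero function on $\Rn$ and redefine its value at the origin to be $-1$: the result is lower semicontinuous and agrees a.e.\ with the weak supersolution $0$, yet it violates the comparison principle at the origin. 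So ``lsc $+$ weak supersolution $\Rightarrow$ $p$-superharmonic'' is not a valid inference.

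The remedy is to look inside the proof of Theorem~\ref{Main-thm-p<2} rather than use it as a black box. That proof shows that the inf-convolutions $(u_k)_\eps$ are \emph{continuous} (indeed semiconcave) weak supersolutions in $\Omega_{r(\eps)}$, and a continuous weak supersolution \emph{is} $p$-superharmonic, since the a.e.\ comparison with a continuous $p$-harmonic competitor upgrades to pointwise comparison by continuity. Then $u_k$, as the increasing pointwise limit of the $p$-superharmonic functions $(u_k)_\eps$, is itself $p$-superharmonic by the standard stability result (see \cite{hkm} or \cite{l}; this is precisely the role of \cite[Theorem~2.3]{l} in the proof of Theorem~\ref{main1}). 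With this correction your final assembly step goes through unchanged.
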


\appendix

\section{Infimal convolutions}
\label{sec:infconv}

In this paper, we use infimal convolutions of general type
\begin{equation}
\label{inf-conv-G}
u_{\eps}(x) = \inf_{y \in \Omega} \left( \, u(y) + \frac{|x-y|^q}{q \, \eps^{q-1}} \, \right),
\end{equation}
where $q \geq 2$. The next lemma contains some basic facts about these operators.
\begin{lemma}
\label{infprop}
Suppose that $u:\Omega \to \R$ is bounded and lower semicontinuous.
\begin{itemize}
\item[(i)] There exists $r(\eps)>0$ such that 
\[
u_{\eps}(x) = \inf_{y \in B_{r(\eps)}(x) \cap\Omega} \left( \, u(y) + \frac{|x-y|^q}{q \, \eps^{q-1}} \, \right).
\]
Moreover, $r(\eps) \to 0$ as $\eps \to 0$.
\item[(ii)] The sequence $(u_{\eps})$ is increasing and $u_{\eps} \to u$ pointwise in $\Omega$.
\item[(iii)] If $u$ is a viscosity supersolution to $-\Delta_p u(x) \geq f(x)$, for $1 < p < \infty$, then $u_{\eps}$ is a viscosity supersolution to 
\[
-\Delta_p u_{\eps}(x) \geq f_{\eps}(x) \qquad  \text{in} \quad \Omega_{r(\eps)},
\] 
where $ f_{\eps}(x) = \inf\limits_{y \in B_{r(\eps)}(x)} f(y)$ and 
$\Omega_{r(\eps)} = \{ x \in \Omega \colon \dist(x, \partial \Omega) > r(\eps) \}$.
\end{itemize}
\end{lemma}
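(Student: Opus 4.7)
The plan is to prove the three parts in order, since (ii) uses (i) and (iii) uses both. For (i), fix $M$ with $|u|\le M$ on $\dom$; plugging the competitor $y=x$ into \eqref{inf-conv-G} yields $u_\eps(x)\le u(x)\le M$, while any $y\in\dom$ with $\tfrac{|x-y|^q}{q\eps^{q-1}}>2M$ satisfies $u(y)+\tfrac{|x-y|^q}{q\eps^{q-1}}>M\ge u_\eps(x)$ and is therefore irrelevant to the infimum, which gives the explicit choice $r(\eps)=(2Mq)^{1/q}\eps^{(q-1)/q}\to 0$ since $q>1$. For (ii), decreasing $\eps$ enlarges the penalty coefficient $\tfrac{1}{q\eps^{q-1}}$, so the quantity inside the infimum increases pointwise in $y$ and hence $u_\eps(x)$ increases as $\eps\to 0$; the bound $u_\eps\le u$ is already noted. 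For the reverse bound, pick $y_\eps\in B_{r(\eps)}(x)\cap\dom$ almost achieving the infimum so that $u(y_\eps)\le u_\eps(x)+\eps$; since $y_\eps\to x$ by (i), lower semicontinuity of $u$ yields $u(x)\le\liminf_{\eps\to 0} u(y_\eps)\le\lim_{\eps\to 0} u_\eps(x)$.

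For (iii), I would exploit the fact that the inf-convolution commutes with translation. Let $\psi\in C^2$ touch $u_\eps$ from below at $x_0\in\Omega_{r(\eps)}$, with $D\psi\ne 0$ on a punctured neighbourhood of $x_0$. By lower semicontinuity of $u$ together with (i), the infimum defining $u_\eps(x_0)$ is attained at some $y_0\in\overline{B_{r(\eps)}(x_0)}\subset\dom$. Set $h:=x_0-y_0$ and define
\[
\phi(y):=\psi(y+h)-\tfrac{|h|^q}{q\eps^{q-1}}.
\]
I would check: (a) $\phi(y_0)=\psi(x_0)-\tfrac{|h|^q}{q\eps^{q-1}}=u_\eps(x_0)-\tfrac{|h|^q}{q\eps^{q-1}}=u(y_0)$; and (b) for $y$ near $y_0$, using the competitor $y$ in the infimum defining $u_\eps(y+h)$ gives $u_\eps(y+h)\le u(y)+\tfrac{|h|^q}{q\eps^{q-1}}$, so
\[
\phi(y)=\psi(y+h)-\tfrac{|h|^q}{q\eps^{q-1}}\le u_\eps(y+h)-\tfrac{|h|^q}{q\eps^{q-1}}\le u(y).
\]
Hence $\phi$ touches $u$ from below at $y_0$; since $D\phi(y)=D\psi(y+h)$, $\phi$ automatically inherits the nonvanishing-gradient condition. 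Applying the viscosity supersolution property of $u$ at $y_0$ yields $\lim_{r\to 0}\sup_{y\in B_r(y_0)}(-\Delta_p\phi(y))\ge f(y_0)$, and because $-\Delta_p\phi(y)=-\Delta_p\psi(y+h)$ this equals $\lim_{r\to 0}\sup_{x\in B_r(x_0)}(-\Delta_p\psi(x))$; finally $y_0\in B_{r(\eps)}(x_0)$ gives $f(y_0)\ge f_\eps(x_0)$, as required.

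The main obstacle is the domain bookkeeping in (iii): the translation argument requires shifted neighbourhoods of $y_0$ to remain inside $\dom$, so that $u$ is defined and is a viscosity supersolution there, and this is exactly what the restriction $x_0\in\Omega_{r(\eps)}$ is designed to enforce via (i). The transfer of the nonvanishing-gradient condition from $\psi$ to $\phi$ in the singular regime $1<p<2$ is automatic once the translation is written explicitly, so no separate argument is needed for the two ranges of $p$.
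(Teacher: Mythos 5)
Your proof is correct and takes essentially the same route as the paper: the cutoff radius in (i) comes from the same boundedness estimate, (ii) supplies the standard monotonicity-plus-lower-semicontinuity details that the paper leaves implicit, and your translated test function $\phi(y)=\psi(y+h)-\tfrac{|h|^q}{q\eps^{q-1}}$ in (iii) is precisely the explicit verification of the paper's one-line claim that the infimum of the translated supersolutions $\varphi_z$ is again a viscosity supersolution. The only caveat, which applies equally to the paper's own argument, is the standard localization needed because Definition \ref{def:div-viscosity} states the touching and nonvanishing-gradient conditions globally while your $\phi$ lies below $u$ only on a neighbourhood of $y_0$; this is handled by the usual equivalence of local and global formulations of the viscosity property.
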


\begin{proof}
These results can be found in the literature, but we prove them here for the readers convenience. For (i) denote $M = \sup_{\Omega}u$ and $ m = \inf_{\Omega} u$ and choose $r = r(\eps)$ such that 
\[
\frac{1}{q}  \frac{r^q}{\eps^{q-1}}  = M -m .
\]
Then $r \to 0$ as $\eps \to 0$ and for all $y \in \Omega \setminus \bar{B}_r(x)$ it holds
\[
u(y)  + \frac{|x-y|^q}{q \, \eps^{q-1}} > m + \frac{r^q}{q \, \eps^{q-1}} \geq  M  \geq u(x),
\]
which proves the claim.

Part (ii) follows directly from the definition of $ u_{\eps}$ and from (i).

To prove (iii), we notice that in view of (i), for every $x \in \Omega_{r(\eps)}$ we have
\[
u_{\eps}(x) = \inf_{y \in B_{r(\eps)}(x)} \left( \, u(y) + \frac{|x-y|^q}{q \, \eps^{q-1}} \, \right) = \inf_{z \in B_{r(\eps)}(0)} \left(  u(z -x) + \frac{|z|^q}{q \, \eps^{q-1}} \, \right).
\]
It is easy to see that for every $z \in  B_{r(\eps)}(0)$ the function $\varphi_z (x) = u(z -x) + \frac{|z|^q}{q \, \eps^{q-1}}$ is a 
viscosity supersolution to $-\Delta_p \varphi_z \geq f_{\eps}$ in $\Omega_{r(\eps)}$. Since $u_{\eps}$ is an infimum over such functions, $u_{\eps}$ itself is a viscosity supersolution to $-\Delta_p u_{\eps} \geq f_{\eps}$ in $\Omega_{r(\eps)}$.
\end{proof}

\begin{lemma}
\label{semiconcave}
Suppose that $u\colon\Omega \to \R$ is bounded and lower semicontinuous. Then $u_{\eps}$ defined by (\ref{inf-conv-G}) 
is semiconcave in $\Omega_{r(\eps)}$, that is, there is a constant $C$ such that the function
\begin{equation}
\label{semiconcave1}
x \mapsto u_{\eps}(x) -  C |x|^2 
\end{equation}
is concave. The constant $C$ depends on $q$, $\eps$ and the oscillation $\sup_{\Omega}u- \inf_{\Omega}u$.
\end{lemma}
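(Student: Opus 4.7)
The plan is to represent $u_\eps$ as the pointwise infimum of the smooth ``paraboloid'' family
\[
\psi_y(x):=u(y)+\frac{|x-y|^q}{q\,\eps^{q-1}},\qquad y\in\Omega,
\]
and to show that, after confining $y$ to one common compact set and subtracting a single quadratic $C|x|^2$, each $\psi_y$ is concave; since the infimum of any family of concave functions is concave, the semiconcavity of $u_\eps$ will follow.

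First I would eliminate the $x$-dependence of the range of $y$. Fix a convex set $K\subset\Omega_{r(\eps)}$ of finite diameter and set $\Omega':=\Omega\cap\overline{K+B_{r(\eps)}(0)}$. Lemma \ref{infprop}(i) guarantees that for every $x\in K$ the infimum defining $u_\eps(x)$ is realized by some $y\in B_{r(\eps)}(x)\cap\Omega\subset\Omega'$, which gives the uniform representation $u_\eps(x)=\inf_{y\in\Omega'}\psi_y(x)$ for all $x\in K$.

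Next I would produce a Hessian estimate for $\psi_y$ that is uniform in $y\in\Omega'$. Direct differentiation gives, for $x\ne y$,
\[
D^2\psi_y(x)=\frac{|x-y|^{q-2}}{\eps^{q-1}}\Bigl(I+(q-2)\frac{(x-y)\otimes(x-y)}{|x-y|^2}\Bigr),
\]
whose largest eigenvalue is $(q-1)|x-y|^{q-2}/\eps^{q-1}$. Since $|x-y|\le R:=\mathrm{diam}(K)+r(\eps)$ whenever $x\in K$ and $y\in\Omega'$, and since $q\ge 2$, this yields $D^2\psi_y(x)\le \tfrac{(q-1)R^{q-2}}{\eps^{q-1}}I$ throughout $K$. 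Setting $C:=\tfrac{(q-1)R^{q-2}}{2\eps^{q-1}}$, the function $x\mapsto\psi_y(x)-C|x|^2$ then has nonpositive Hessian on $K$ and is therefore concave on this convex set for every fixed $y\in\Omega'$.

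Taking infima of these concave functions preserves concavity, so $u_\eps-C|x|^2$ is concave on $K$. When $\Omega$ is bounded one takes $K=\Omega_{r(\eps)}$, so that $R$---and hence $C$---depends only on $q$, $\eps$ and $r(\eps)$, which in turn is determined by $q,\eps$ and $\mathrm{osc}(u)$ via Lemma \ref{infprop}(i), matching the statement. The only delicate point is the first step: the optimal $y$ varies with $x$, and one genuinely needs Lemma \ref{infprop}(i) to trap every candidate $y$ inside a single compact set $\Omega'$; once that reduction is in place, the Hessian computation and the infimum-of-concaves principle are mechanical.
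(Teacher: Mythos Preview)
Your approach is the same as the paper's: write $u_\eps$ as an infimum of the smooth functions $\psi_y$, bound $D^2\psi_y$ uniformly, subtract the corresponding quadratic, and use that an infimum of concave functions is concave. You are in fact more careful than the paper about freezing the index set of $y$'s before taking the infimum.

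There is, however, a gap in your last paragraph. Your constant is $C=\tfrac{(q-1)R^{q-2}}{2\eps^{q-1}}$ with $R=\mathrm{diam}(K)+r(\eps)$, and you then assert that taking $K=\Omega_{r(\eps)}$ makes $R$ depend only on $q,\eps,r(\eps)$. That is not so: $R$ still contains $\mathrm{diam}(K)=\mathrm{diam}(\Omega_{r(\eps)})$, which depends on the domain and not on $\mathrm{osc}\,u$; moreover $\Omega_{r(\eps)}$ need not be convex, so the choice $K=\Omega_{r(\eps)}$ is not admissible in your own setup. The paper's constant is $C=\tfrac{(q-1)r(\eps)^{q-2}}{2\eps^{q-1}}$, which genuinely depends only on $q,\eps,\mathrm{osc}\,u$, because the Hessian bound is only needed at the touching configuration, where $|x-y|\le r(\eps)$.

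You can recover this sharp constant from your own argument by a localization: run your proof with $K$ equal to a small ball $B_\rho(x_0)$, so that $R=\rho+r(\eps)$. This shows $u_\eps - C_\rho|x|^2$ is concave on $B_\rho(x_0)$ with $C_\rho=\tfrac{(q-1)(\rho+r(\eps))^{q-2}}{2\eps^{q-1}}$. Hence for any $C'>\tfrac{(q-1)r(\eps)^{q-2}}{2\eps^{q-1}}$ the function $u_\eps-C'|x|^2$ is locally concave in $\Omega_{r(\eps)}$; since a continuous, locally concave function is concave on every segment contained in its domain, it is concave on $\Omega_{r(\eps)}$. Letting $C'$ decrease to the limiting value gives the constant with the stated dependence.
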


\begin{proof}
Fix $x \in \Omega_{r(\eps)}$. For $y \in \Omega \cap  B_{r(\eps)}(x)$, where $r(\eps)$ is the radius appearing in Lemma  
\ref{inf-conv-G} (i),  denote $\varphi_y(x) = u(y) + \frac{1}{q \, \eps^{q-1}}|x-y|^q$. Since $q \geq 2$, $\varphi_y$ is smooth and 
\[
D^2\varphi_y(x) \leq \frac{q-1}{\eps^{q-1}}|x-y|^{q-2}I.
\]
Choosing 
\[
C =  \frac{q-1}{2\eps^{q-1}} r(\eps)^{q-2} 
\] 
yields $D^2\varphi_y(x) \leq 2CI$ for every $y \in \Omega \cap  B_{r(\eps)}(x)$. This implies that the function 
\[
 \varphi_y(x) - C|x|^2
\]
is concave for every $y \in \Omega  \cap  B_{r(\eps)}(x)$. By taking an infimum over $y \in B_{r(\eps)}(x)$ we conclude that 
\[
 \inf_{y \in \Omega  \cap  B_{r(\eps)}(x)} (\varphi_y(x) - C|x|^2 ) = u_{\eps}(x) - C|x|^2
\]
is concave. 
\end{proof}

\begin{remark}
From (\ref{semiconcave1}) it follows that $u_{\eps}$ is twice differentiable almost everywhere and
\[
D^2u_{\eps}(x) \leq 2C I \qquad \text{a.e.} \,\,x \in \Omega_{r(\eps)}.
\]
\end{remark}

Next we present the proofs for Lemmas \ref{lemma_p<2} and \ref{p<2lemma2}.

\begin{proof}[Proof of Lemma \ref{lemma_p<2}] 
Claim
(i) follows directly from the fact that $Y_{\eps}(x)$ is a closed set. Indeed, fix a point $x_0 \in \Omega$ and let 
$x_k \to x_0$. Choose $y_k \in Y_{\eps}(x_k)$ such that
\[
|y_k - x_k|= \max_{y \in Y_{\eps}(x_k)} |y-x_k|.
\] 
If $y_0$ is any cluster point of $(y_k)$, then $y_0 \in Y_{\eps}(x_0)$. This implies that 
\[
\limsup_{k \to \infty} \max_{y \in Y_{\eps}(x_k)} |y-x_k| = \limsup_{k \to \infty} |y_k - x_k| \leq 
\max_{y \in Y_{\eps}(x_0)} |y-x_0| .
\]  

To prove (ii), suppose that $x$ is such that the gradient $D u_{\eps}(x)$ exists. Fix $y \in  Y_{\eps}(x)$ and let 
$e = \frac{y-x}{|y-x|}$. Then 
\[
 u_{\eps}(x) = u(y) + \frac{|x - y|^q}{q \, \eps^{q-1}} \qquad \text{and} 
\qquad  u_{\eps}(x + he) \leq u(y) + \frac{|x + he - y|^q}{q \, \eps^{q-1}},
\]
which in turn imply
\[
 \begin{split}
 D u_{\eps}(x)\cdot e &= \lim_{h \to 0} \frac{u_{\eps}(x + he) - u_{\eps}(x)}{h} \\
		&\leq  \lim_{h \to 0} \frac{1}{h} \left( u(y) + \frac{|x + he - y|^q}{q \, \eps^{q-1}} -  u(y) -
\frac{|x - y|^q}{q \, \eps^{q-1}}\right) \\
&=  \lim_{h \to 0}  \frac{\left( 1- \frac{h}{|y-x|}\right)^q -1}{h} \, \left( \frac{|x-y|^q}{q \, \eps^{q-1}} \right)\\	
	&= -\left(\frac{|x -y|}{\eps}\right)^{q-1} .
\end{split}
\]
Since $- |D u_{\eps}(x)| \leq  D u_{\eps}(x)\cdot e$, (ii) follows.
\end{proof}

\begin{proof}[Proof of Lemma \ref{p<2lemma2}] 
Suppose $\hat{x}$ is as in the assumption. From Lemma \ref{lemma_p<2}, it follows that $u_\eps(\hat{x}) = u(\hat{x})$ and therefore 
\[
 u(y)+\frac{\abs{\hat{x}-y}^q}{q \eps^{q-1}} \geq u(\hat{x}) \qquad \text{for every} \, \, y\in \Omega .
\]
Denote $\psi(y) = u(\hat x)-\frac{1}{q \eps^{q-1}}\abs{\hat{x}-y}^q$ and notice that $\psi \in C^2(\Omega)$. 
As $q>\frac{p}{p-1}$, an easy calculation yields
\[
 \lim_{r \to 0} \sup_{y \in B_r(\hat{x})}( -\Delta_p \psi(y)) =0.
\]
On the other hand, as $\psi(\hat x)=u(\hat x)$, $\psi(y)\le u(y)$ for $y\in\Omega$ and 
$u$ is a viscosity supersolution to (\ref{poisson}), we have 
$ \lim\limits_{r \to 0} \sup\limits_{y \in B_r(\hat{x})}( -\Delta_p \psi(y)) \ge f(\hat x)$.
Thus it follows that $ f(\hat{x}) \leq 0$. 
\end{proof}


\begin{thebibliography}{99}
\bibitem{cil}
{\sc Crandall, M. G., Ishii, H., and Lions,
              P.-L.},
User's guide to viscosity solutions of second order partial
              differential equations, Bull. Amer. Math. Soc. (N.S.), 27(1):1--67, 1992.


\bibitem{eg}
{\sc Evans, L. C., and R. F. Gariepy}, 
Measure theory and fine properties of functions. Studies in Advanced Mathematics. CRC Press, Boca Raton, FL, 1992.

\bibitem{hkm}
{\sc Heinonen, J., T. Kilpel�inen, and O. Martio}, 
Nonlinear potential theory of degenerate elliptic equations. 
Dover Publications, Inc., Mineola, NY, 2006.

\bibitem{in}
{\sc Ishii, H., and G. Nakamura}, 
A class of integral equations and approximation of $p$-Laplace equations.  
Calc. Var. Partial Differential Equations  37  (2010),  no. 3-4, 485--522.

\bibitem{is}
{\sc Ishii, H. and P. E. Souganidis},
Generalized motion of noncompact hypersurfaces with velocity
having arbitrary growth on the curvature tensor,
Tohoku Math. J. (2), 47 (1995), pp. 227--250.

\bibitem{ju}
{\sc Juutinen, P.}, 
Minimization problems for Lipschitz functions via viscosity solutions, 
Ann. Acad. Sci. Fenn. Math. Diss.  No. 115  (1998).

\bibitem{jl}
{\sc Juutinen, P., and P. Lindqvist},
A theorem of {R}ad\'o's type for the solutions of a quasi-linear equation,
Math. Res. Lett., 11(1):31--34, 2004.

\bibitem{jlm}
{\sc Juutinen, P., P. Lindqvist, and J. J. Manfredi},
On the equivalence of viscosity solutions and weak solutions
              for a quasi-linear equation, SIAM J. Math. Anal. 33(3):699--717, 2001.

\bibitem{l} {\sc Lindqvist, P.}, On the definition and
  properties of {$p$}-superharmonic functions, J. Reine Angew. Math.,
  365:67--79, 1986.

\bibitem{lms}
{\sc Lindqvist, P., J. J. Manfredi, and E. Saksman}, Superharmonicity of nonlinear ground states.  
Rev. Mat. Iberoamericana  16  (2000),  no. 1, 17--28.

\bibitem{mpr}
{\sc Manfredi, J., M. Parviainen, and J. D. Rossi}, 
An asymptotic mean value characterization for $p$-harmonic functions.  Proc. Amer. Math. Soc.  138  (2010),  no. 3, 881--889.

\bibitem{ps}
{\sc Peres, Y., and S. Sheffield}, 
Tug-of-war with noise: a game-theoretic view of the $p$-Laplacian.  Duke Math. J.  145  (2008),  no. 1, 91--120.

\bibitem{p}
{\sc Pokrovskii, A. V.}, 
Removable singularities of solutions of elliptic equations.
 (Russian)  Sovrem. Mat. Prilozh.  No. 57, Trudy Mezhdunarodnoi Konferentsii po Dinamicheskim Sistemam i Differentsial'nym 
Uravneniyam. Ch. 3 (2008), 54--72;  translation in  J. Math. Sci. (N. Y.)  160  (2009),  no. 1, 61--83.
\end{thebibliography}
\end{document}